\newtheorem{theorem}{Theorem}[section]
\newtheorem{lemma}[theorem]{Lemma}
\newtheorem {proposition}[theorem]{Proposition}
\theoremstyle{definition}
\newtheorem{definition}[theorem]{Definition}
\newtheorem{example}[theorem]{Example}
\theoremstyle{remark}
\newtheorem{remarks}[theorem]{Remarks}
\numberwithin {equation}{section}
\def\ra{{\rightarrow}}
\def\lra{{\longrightarrow}}
\def\inv{{^{-1}}}
\def\pr{{^{\prime}}}
\def\c{{\mathbb{C}}}
\def\z{{\mathbb{Z}}}
\def\r{{\mathbb{R}}}
\def\q{{\mathbb{Q}}}
\def\qq{{$\q$}}
\def\nn{{\mathbb{N}}}
\def\af{{\mathbb{A}_f}}
\def\zp{{\mathbb{Z}_p}}
\def\qp{{\mathbb{Q}_p}}
\def\cdc{{,\cdots,}}
\def\tqr{{\textquoteright}}
\def\p{{\parallel}}
\def\s{{^{\ast}}}
\def\crs{{^{\times}}}
\def\h{{\mathcal{H}}}
\def\lo{{\mathcal{L}_1}}
\def\ln{{\mathcal{L}_n}}
\def\rcal{{\mathcal{R}}}
\def\gl{{\text{GL}_2^+}}
\def\ff{{\varphi}}
\def\si{{\sigma}}
\def\la{{\lambda}}
\def\lam{{\Lambda}}
\def\al{{\alpha}}
\def\eps{{\mathcal{E}_{\beta}}}
\def\epsin{{\mathcal{E}_{\infty}}}
\def\bt{{\beta}}
\def\kms{{$\text{KMS}_{\beta}$\;}}
\def\cs{{$C^{\ast}$}}
\def\gll{{$\text{GL}_2$}}
\begin{document}
\title{Quantum Statistical Mechanics of $\mathbb{Q}$-lattices and noncommutative geometry}

\author{Vahid Shirbisheh}
\address{Department of Mathematics, Tarbiat Modares University, Tehran, Iran}
\email{shirbisheh@modares.ac.ir}


\begin{abstract}
After recalling some basic notions of quantum statistical mechanics,
we explain the Bost-Connes system that relates the structure of the
maximal abelian extension of \qq \;to the space of \kms states of a
\cs-dynamical system. Afterwards, we study briefly the
Connes-Marcolli $\text{GL}_2$-system as a generalization of the
former system.\\
\end{abstract}
\maketitle 

\section*{introduction}
In [BC], Bost and Connes established a new bridge between number
theory and physics via operator algebras by introducing a quantum
statistical model describing the Galois theory of maximal abelian
extension of $\q$. Their work has motivated further developments in
using noncommutative geometry to tackle the problem of the explicit
class field theory of number fields, to name a few [Co, CM1, CMR1,
HP, J, LLaN], as well as several new directions in the field of
operator algebras, see for example [CuLi, KLnQ, L1,L2, LR1, LR2, T].
Here, we only study some of the developments in the application of
noncommutative geometry to the explicit class field theory problem.
We shall see that the notion of \qq-lattices plays an important role
in new advances, so we focus on this notion. Our study here is far
from being complete or detailed. Therefore, we refer the interested
reader to [CM1, CM2] for more details and more complete lists of
references.

The first draft of this work was prepared when I was a PhD student
at the University of Western Ontario. It was presented as the
required talk for the second part of my PhD comprehensive
examination in May 2005. I would like to thank Matilde Marcolli
whose comments on the first version of this preprint have helped me
to discuss some recent progresses and add some more references in
the present version.
\section{Basics of quantum statistical mechanics}
\begin{definition}
\begin{itemize}
  \item [(a)]
A \textbf{$C\s$-dynamical system} is a pair $(A,\si)$ such that $A$
is a $C\s$-algebra, called \textbf{the algebra of observable}, and
$\si$ is a one parameter group of automorphisms of $A$, called the
\textbf{time evolution of the system}, such that for $s, t \in
\mathbb{R}$, we have $\si_{s+t}=\si_s\si_t$, and $\si$ is strongly
continuous, that is  for every $a \in A$ the map $t\mapsto\si_t(a)$
from $\r$ into $A$ is continuous in norm topology.
  \item [(b)]
A bounded linear functional $\ff:A\ra\c$ is called a \textbf{state
on $A$ }, if
\[
\p \ff \p = 1,
\]
and
\[
\ff(aa\s)\geq0, \qquad \forall a \in A.
\]
\end{itemize}
\end{definition}

\begin{definition}
Let $(A,\si)$ be a $C\s$-dynamical system and $\ff$ be a state on
$A$. For $0<\beta<\infty$, we say $\ff$ is a \textbf{KMS state at
inverse temperature $\beta$ on $(A,\si)$}, or simply a
\textbf{$\text{KMS}_{\beta}$ state on $A$}, if it satisfies
{\bf  $\text{KMS}_{\beta}$ condition}:\\
For every $x, y \in A$, there is a bounded holomorphic function
$F_{x, y}(z)$ on the open strip $0< Im z<\beta$, continuous on the
closure of the strip, such that for all $t \in \mathbb{R}$ we have
\[
F_{x, y}(t)=\ff(x\si_t(y)),
\]
\[
F_{x, y}(t+i\beta)=\ff(\si_t(y)x).
\]
A \textbf{KMS state at $\infty$ on $(A,\si)$} or simply a
\textbf{ground state on $A$} is a weak limit of \kms states $\ff_{\beta}$\tqr s
as $\beta\ra\infty$,
\[
\ff_{\infty}:=lim_{\beta\ra\infty} \ff_{\beta} (a), \qquad
\forall\; a\in A.
\]
\end{definition}
In thermodynamics $\beta=1/kT$, where $k$ is the Boltzmann constant,
and $T$ is the temperature of the system. For simplicity, we assume
$k=1$.

\begin{example}
Let $A=M_n(\c)$ be the algebra of $n\times n$ matrices with
complex entries. Any one parameter group of automorphisms
$(\si_t)_{t \in \mathbb{R} }$ of $A$ has the form
\[
\si_t(x)=e^{itH}x e^{-itH},\qquad \forall x \in, t \in \r,
\]
for some self-adjoint operator $H \in A$. Then for $\beta>0$ one
has a unique \kms state, called the \textbf{Gibbs equilibrium
state}, given by
\begin{eqnarray}
\ff_{\beta}(x)=\frac{\text{Tr}(e^{-\beta H}x)}{\text{Tr}(e^{-\beta
H})},\qquad \forall x\in A.
\end{eqnarray}
The normalizer of above state, i.e.
\[
\text{Tr}(e^{-\beta H})
\]
is called the \textbf{partition function} of the state. Due to the
fact that
\[
\ff_\beta(\sigma_t(x))=\ff_\beta(x), \qquad \forall x\in A, t \in
\r,
\]
the Gibbs state is an equilibrium state with respect to every time
evolution of the system, hence the name.
\end{example}

If one accepts the Gibbs formalism for thermodynamics, namely one
describes a thermodynamics by a \cs-dynamical system $(A,\si)$, then
\kms states on $(A,\si)$ play the role of equilibrium states, that
is for every \kms state $\ff$ we have
\[
\ff(\si_t(a)) = \ff(a),
\qquad \forall \; a \in A,\; t \in \r.
\]
In contrary to the above example, \kms states are not necessarily
unique. To describe the space of \kms states on a \cs-dynamical
system $(A,\si)$, we need the following definition:
\begin{definition}
\begin{itemize}
  \item [(a)] Any compact simplex in a locally convex topological vector space $E$ is
  called a \textbf{Choquet simplex}.
  \item [(b)] Let $M$ be a von Newmann algebra. The center of M is defined as
  $Z(M):=M\cap M\pr$ and M is called a \textbf{factor}, if $Z(M)=1\c$.
  \item [(c)] Let $\ff$ be a state on a \cs-algebra $A$. We say $\ff$ is a
  \textbf{factor state} if $\pi_{\ff}(A)^{\pr\pr}$, the enveloping von Newmann
  algebra of the cyclic representation associated to $\ff$, is a factor.
\end{itemize}
\end{definition}

\begin{proposition} ([BtR]) Let $(A,\si)$ be a \cs-dynamical system
and $\beta>0$. Then the space of \kms states on $(A,\si)$ is a
compact Choquet simplex, and the extreme points are factor states.
\end{proposition}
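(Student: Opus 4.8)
The plan is to realize the space of \kms states, which I denote $\mathcal{K}_\beta$, as a convex weak-$*$ compact set whose barycentric structure is governed by modular theory, and then to read off both the simplex property and the characterization of extreme points from the uniqueness of the central decomposition. I organize the argument into three stages: topology, modular structure, and decomposition.

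First I would establish the elementary topological properties. Convexity is immediate: given \kms states $\varphi_0,\varphi_1$ with associated functions $F^0_{x,y}$ and $F^1_{x,y}$, the combination $t F^1_{x,y}+(1-t)F^0_{x,y}$ is again bounded and holomorphic on the strip, continuous on its closure, and realizes the correct boundary values for $t\varphi_1+(1-t)\varphi_0$. For compactness, the state space of $A$ is weak-$*$ compact by Banach--Alaoglu, so it suffices to prove that $\mathcal{K}_\beta$ is weak-$*$ closed. Here I would replace the defining condition by its equivalent formulation on the dense $\ast$-subalgebra $A_\sigma$ of entire analytic elements: $\varphi$ is \kms iff $\varphi(a\,\sigma_{i\beta}(b))=\varphi(ba)$ for all $a,b\in A_\sigma$. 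Each such identity is a weak-$*$ closed linear constraint on $\varphi$, so the intersection defining $\mathcal{K}_\beta$ is closed, hence compact.

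Next I would bring in modular theory to expose the internal structure. Every \kms state is $\sigma$-invariant, so in the GNS triple $(\mathcal{H}_\varphi,\pi_\varphi,\Omega_\varphi)$ the evolution $\sigma$ is implemented by a strongly continuous unitary group fixing $\Omega_\varphi$. The \kms condition forces $\Omega_\varphi$ to be separating for $M:=\pi_\varphi(A)''$, so Tomita--Takesaki theory applies and produces the modular automorphism group $\sigma^\varphi$; the \kms condition then identifies $\sigma^\varphi$ with the suitably rescaled time evolution. The crucial consequence is that the dynamics on $M$ is intrinsic to $\varphi$, and in particular commutes with every projection in the center $Z:=M\cap M'$.

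Finally I would obtain the simplex property and the description of extreme points through the central decomposition. Associated to $Z$ there is a canonical orthogonal (subcentral) measure $\mu_\varphi$ on the state space representing $\varphi$ and supported on factor states. Because the modular flow leaves every central projection fixed, each component in this disintegration inherits the \kms condition at the same $\beta$; thus $\mu_\varphi$ is carried by factor \kms states, which are then precisely the extreme points of $\mathcal{K}_\beta$, while conversely an extreme point admits no nontrivial central decomposition and so must itself be a factor state. The simplex property then follows from Choquet theory: the central measure $\mu_\varphi$ is the unique maximal measure representing $\varphi$, and uniqueness of the maximal representing measure at every point is exactly the defining property of a Choquet simplex. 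The step I expect to be the main obstacle is showing that the components of the central decomposition remain \kms at the same inverse temperature and that $\mu_\varphi$ coincides with the Choquet-maximal measure; this rests on the compatibility between the modular structure of $\varphi$ and the disintegration over $Z$, which is the technical core of the whole argument.
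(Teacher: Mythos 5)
The paper gives no proof of this proposition: it is quoted directly from Bratteli--Robinson [BtR], and your outline is essentially the argument given there (weak-$*$ closedness via the reformulation on entire analytic elements, Tomita--Takesaki theory applied to the separating cyclic vector, and the subcentral orthogonal measure over $Z=\pi_\varphi(A)''\cap\pi_\varphi(A)'$ as the unique maximal representing measure). Your sketch is correct in structure; the one point you rightly flag as the technical core --- that every \kms state dominated by a multiple of $\varphi$ arises as $\varphi(h\,\cdot)$ for a positive $h$ in the center, which is what makes the central measure maximal \emph{within} $\mathcal{K}_\beta$ and identifies extreme points with factor states --- is exactly the Radon--Nikodym-type lemma on which [BtR] spends most of its effort, and note also that weak-$*$ compactness of the state space requires $A$ unital (otherwise one must argue on the quasi-state space).
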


\noindent\textbf{Notation.} For $0<\beta\leq\infty$, the set of
extreme points of \kms states, which is also called the {\bf set of
extremal \kms states}, is denoted by $\eps$.

\section{The Bost-Connes system, 1-dimensional case}
In [BC], Bost and Connes constructed a \cs-dynamical system
$(A,\si)$, with an action of the id\`{e}les class group of
$\mathbb{Q}$ modulo the connected component of the identity as
symmetry. The Riemann zeta function appears as the partition
function of the system. In this section, we describe the Bost-Connes
(or briefly BC) system.

\begin{definition}
The underlying \cs-algebra $A$ of BC system is generated by two
types of operators $\{e(r); r \in \q/\z\}$, and $\{\mu_n; n \in
\nn\crs\}$ subject to following conditions:
\begin{itemize}
\item[(a)] $\mu_n\mu_n\s=1, \qquad \forall\; n,$
\item[(b)] $\mu_m\mu_n=\mu_n\mu_m, \qquad \forall\; m, n,$
\item[(c)] $e(0)=1, e(r+s)=e(r)e(s),\; \text{and}\; e(r)\s=e(-r), \quad \forall\; r, s,$
\item[(d)] $\mu_ne(r)\mu_n\s=\frac{1}{n}\sum_{ns=r}e(s), \qquad \forall\; n, r.$
\end{itemize}
The time evolution of the system is given by
\begin{eqnarray}
\sigma_t(\mu_n):=n^{it}\mu_n, \quad \sigma_t(e(r)):=e(r).
\end{eqnarray}
\end{definition}

\begin{remarks}
\begin{itemize}
\item[(a)] Bost and Connes defined this \cs-algebra also as the reduced Hecke \cs-algebra of
the Hecke pair $(P^+_{\z}, P^+_{\q})$, where
\[
P^+_{\z}:=\left\{ \begin{array}{c}
             \left[ \begin{array}{cc}
                     1 & n \\
                     0 & 1
                    \end{array}
             \right]
 ; n \in \z
          \end{array}
           \right\},
\]
and
\[
P^+_{\q}:=\left\{ \begin{array}{c}
             \left[ \begin{array}{cc}
                     1 & b \\
                     0 & a
                    \end{array}
             \right]
 ; a, b \in \q, \; \text{and}\; a>0
          \end{array}
           \right\}.
\]

\item[(b)] In [LR], Laca and Raeburn described the above \cs-algebra $A$ as a semigroup \cs-crossed product.
In terms of the above definition, they used Relation (c) to
construct a representation of $C\s(\q/\z)$ in $A$. Then, Relations
(a), (b) define an action of $\nn\crs$ on $A$ by isometries.
Afterwards, Relation (d) shows that $(e,\mu)$ is a covariant pair
for the dynamical system $(C\s(\q/\z), \nn\crs, \beta)$, where
$\beta$ is the action of $\nn\crs$ on $C\s(\q/\z)$ by endomorphisms
which is defined as
\[
\qquad \qquad \beta_n(i(r)):=\frac{1}{n}\sum_{j=1}^n
i(\frac{r}{n}+\frac{j}{n}), \qquad\quad \forall \; n\in \nn\crs,
\; \text{and} \; r\in\q/\z,
\]
where $i:\q/\z\ra C\s(\q/\z)$ is the natural embedding of discrete
group $\q/\z$ in its group \cs-algebra. The existence of a covariant
pair implies the existence of the semigroup \cs-crossed product of
the above semigroup \cs-dynamical system, and
\[
A=C\s(\q/\z)\rtimes_{\beta}\nn\crs.
\]
\end{itemize}
\end{remarks}

\subsection{The Bost-Connes system in terms of 1-dimensional \qq-lattices}\quad
\par
In Remarks 2.2, we saw two different formulations of BC system.
There are yet two more formulations for this system. Its underlying
\cs-algebra can be obtained as a \cs-algebra associated to
1-dimensional \qq-lattices as well as a groupoid \cs-algebra. The
notion of \qq-lattices was first initiated by Connes and Marcolli,
[CM1], in order to generalize BC system to higher dimensions and
find a way to tackle the problem of explicit class field theory of
real quadratic fields. Thereby, Connes-Marcolli (or simply CM)
\gll-system was invented. Afterwards, Connes, Marcolli, and
Ramachandran introduced the notion of a $\mathbb{K}$-lattice for
$\mathbb{K}$ being an imaginary quadratic field in [CMR1], see also
[CM2, CMR2]. In [CMR1], they constructed a quantum statistical
model, similar to BC system, for explicit class field theory of
imaginary quadratic fields. Finally, Ha and Paungam generalized
Bost-Connes-Marcolli system for an arbitrary Shimura datum in [HP].
As one may notice, the key concept in recent developments of the
subject is the notion of \qq-lattices. Therefore, we study it here.
\begin{definition}
\begin{itemize}
\item[(a)] An (n-dimensional) \textbf{$\q$-lattice in $\r^n$} is a pair $(\Lambda,\ff)$,
where $\Lambda$ is an $n$-dimensional lattice, that is a discrete
subgroup of $\r^n$ of rank $n$, or equivalently, a free subgroup
generated by $n$ linearly independent vectors, and
\[ \ff:\q^n/\z^n\longrightarrow\q\Lambda/\Lambda
\]
is a homomorphism of abelian groups. We denote the space of
$n$-dimensional \qq-lattices by $\ln$.
\item[(b)] Two $\q$-lattices $(\Lambda_1,\ff_1)$, and
$(\Lambda_2,\ff_2)$ are called \textbf{commensurable}, and denote
by $(\Lambda_1,\ff_1)\sim(\Lambda_2,\ff_2)$, if
\[
\q\Lambda_1=\q\Lambda_2,
\]
and
\[
(\ff_1-\ff_2)(x)\in\Lambda_1+\Lambda_2, \qquad \forall\; x\in\q^n/\z^n.
\]
\end{itemize}
\end{definition}
The latter condition can also be read as $\ff_1$ and $\ff_2$ are
equal modulo $\Lambda_1+\Lambda_2$.
\begin{lemma} The commensurability of \qq-lattices is an
equivalence relation.
\end{lemma}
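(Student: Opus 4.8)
The plan is to verify reflexivity, symmetry, and transitivity separately, with essentially all the content sitting in transitivity. Throughout I would fix the ambient space $V:=\q\Lambda_1=\q\Lambda_2$ and read each value $\ff_i(x)$ as a coset in $\q\Lambda_i/\Lambda_i=V/\Lambda_i$, forming the difference $(\ff_1-\ff_2)(x)$ by choosing lifts $v_1,v_2\in V$ of $\ff_1(x),\ff_2(x)$ and comparing them in $V$. The first thing to record is that the condition $(\ff_1-\ff_2)(x)\in\Lambda_1+\Lambda_2$ does not depend on the chosen lifts, since changing $v_i$ by an element of $\Lambda_i\se\Lambda_1+\Lambda_2$ leaves $v_1-v_2$ in the same coset of $\Lambda_1+\Lambda_2$; equivalently, the condition says exactly that $\ff_1$ and $\ff_2$ agree after projection to $V/(\Lambda_1+\Lambda_2)$.

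Reflexivity would then be immediate: $\q\Lambda=\q\Lambda$ and $(\ff-\ff)(x)=0\in\Lambda+\Lambda$. Symmetry is equally quick, because $\q\Lambda_1=\q\Lambda_2$ is symmetric and $\Lambda_1+\Lambda_2=\Lambda_2+\Lambda_1$ is a subgroup, so $(\ff_2-\ff_1)(x)=-(\ff_1-\ff_2)(x)$ lands in it as well.

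For transitivity I would assume $(\Lambda_1,\ff_1)\sim(\Lambda_2,\ff_2)$ and $(\Lambda_2,\ff_2)\sim(\Lambda_3,\ff_3)$. The equality of spans gives $\q\Lambda_1=\q\Lambda_2=\q\Lambda_3=:V$, so all three are rank-$n$ lattices spanning the same $\q$-vector space $V$; in particular $\Lambda_1+\Lambda_3$ and $\Lambda_1+\Lambda_2+\Lambda_3$ are again lattices of rank $n$ in $V$, so the quotient $(\Lambda_1+\Lambda_2+\Lambda_3)/(\Lambda_1+\Lambda_3)$ is \emph{finite}. Fixing $x$ and lifts $v_i\in V$ of $\ff_i(x)$, the hypotheses give $v_1-v_2\in\Lambda_1+\Lambda_2$ and $v_2-v_3\in\Lambda_2+\Lambda_3$, and adding these yields $v_1-v_3\in\Lambda_1+\Lambda_2+\Lambda_3$. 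This is the obvious but \textbf{insufficient} conclusion: it only places $(\ff_1-\ff_3)(x)$ in $\Lambda_1+\Lambda_2+\Lambda_3$ rather than in the smaller group $\Lambda_1+\Lambda_3$ demanded by commensurability, and discarding the spurious $\Lambda_2$-contribution is the one real obstacle in the whole lemma.

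To remove it I would pass to the homomorphism $\psi:\q^n/\z^n\ra V/(\Lambda_1+\Lambda_3)$ sending $x$ to the class of $v_1-v_3$; by the lift-independence noted above this is well defined and additive, and the computation just made shows its image is contained in the finite subgroup $(\Lambda_1+\Lambda_2+\Lambda_3)/(\Lambda_1+\Lambda_3)$. On the other hand $\q^n/\z^n$ is a divisible abelian group, so $\psi(\q^n/\z^n)$ is divisible, being a homomorphic image of a divisible group. Since a finite divisible abelian group is trivial, $\psi\equiv 0$, i.e. $(\ff_1-\ff_3)(x)\in\Lambda_1+\Lambda_3$ for every $x$; together with $\q\Lambda_1=\q\Lambda_3$ this gives $(\Lambda_1,\ff_1)\sim(\Lambda_3,\ff_3)$. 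The only delicate point is exactly this interplay between the divisibility of $\q^n/\z^n$ and the finiteness of the lattice quotient; the rest is routine bookkeeping about lifts and sums of lattices.
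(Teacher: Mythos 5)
Your proof is correct and takes essentially the same route as the paper: both reduce transitivity to the finiteness of the index of $\Lambda_1+\Lambda_3$ in $\Lambda_1+\Lambda_2+\Lambda_3$ and then invoke the divisibility of $\q^n/\z^n$. Your appeal to \ql a finite divisible group is trivial\qr{} is just the abstract packaging of the paper's explicit computation $(\ff_1-\ff_3)(x)=M(\ff_1-\ff_3)(x/M)\in\Lambda_1+\Lambda_3$.
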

\begin{proof}
We only show that commensurability is transitive. Let
$(\Lambda_1,\ff_1)\sim(\Lambda_2,\ff_2)$, and
$(\Lambda_2,\ff_2)\sim(\Lambda_3,\ff_3)$. Obviously
$\q\lam_1=\q\lam_3$.\\
Let $\lam=\lam_1+\lam_2+\lam_3$, then $(\ff_1-\ff_3)(x)\in\lam$ for all $x\in\q/\z$. To see that it actually belongs to $\lam_1+\lam_3$ we need to show that $\lam_1+\lam_3$ is of finite index in $\lam$.\\
$\q\lam_1=\q\lam_2$ implies that $m\lam_2\subset\lam_1$ for some $m\in\nn$. Thus
$m\lam_2+\lam_1+\lam_3\subset \lam_1+\lam_3$. Let $e_1\cdc e_n$
generate $\lam_2$, then $\{\sum_{i=1}^n m_ie_i+\lam_1+\lam_3;
1\leq m_i\leq m\}$ is a complete set of cosets of $\lam_1+\lam_3$
in $\lam$, so $\lam_1+\lam_3$ is of finite index in $\lam$.\\
Thus, there exists $M\in\nn$ such that for all $x\in
\q^n/\z^n$
\[
M(\ff_1-\ff_3)(x)\in\lam_1+\lam_3.
\]
This implies that
\[
(\ff_1-\ff_3)(x)=M(\ff_1-\ff_3)(\frac{x}{M})\in\lam_1+\lam_3,
\qquad\forall\;x\in\q^n/\z^n.
\]
\end{proof}
\noindent\textbf{1-dimensional \qq-lattices.} One easily checks
that every \qq-lattice $(\lam,\ff)$ in $\r$ can be written as
$(\la\z,\la\rho)$ for some $\la>0$ and some
\begin{eqnarray}
\rho\in R:=\text{Hom}_{\z}(\q/\z,\q/\z).
\end{eqnarray}
Thus, the space $\lo/\r^+$ of 1-dimensional \qq-lattices up to
scaling can be identified by $R$. On the other hand, one can
formulate the equivalence classes of 1-dimensional \qq-lattices
under the equivalence relation of commensurability as the orbits of
an action of $\nn^{\times}$ on the space of \qq-lattices.

\begin{lemma}
Two \qq-lattices $(\la_i\z,\la_i\rho_i),\; i=1,2$ are
commensurable if and only if there exist $m, n \in \nn$ such that
\[
m\la_1=n\la_2,
\]
and
\[
(n\rho_1-m\rho_2)(x)\in \z, \qquad \forall\; x\in \q/\z.
\]
\end{lemma}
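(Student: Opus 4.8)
The plan is to unwind the two defining conditions of commensurability (Definition 2.3(b)) in the one-dimensional coordinates $(\la_i\z,\la_i\rho_i)$ and match them against the two displayed relations. First I would dispose of the condition $\q\lam_1=\q\lam_2$. Since $\q(\la_i\z)=\la_i\q$ and both $\la_i>0$, the equality $\la_1\q=\la_2\q$ holds exactly when $\la_1/\la_2\in\q^+$, that is, exactly when there exist $m,n\in\nn$ with $m\la_1=n\la_2$. This already pins down the first displayed relation and shows it is equivalent to the first commensurability condition.

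Next I would rewrite the second condition $(\ff_1-\ff_2)(x)\in\lam_1+\lam_2$. Setting $\ell:=m\la_1=n\la_2$, I would compute the sum lattice $\lam_1+\lam_2=\la_1\z+\la_2\z=\frac{\ell}{m}\z+\frac{\ell}{n}\z=\frac{\gcd(m,n)}{mn}\,\ell\,\z$ and expand the difference of the maps as
\[
\la_1\rho_1(x)-\la_2\rho_2(x)=\frac{\ell}{mn}\bigl(n\rho_1(x)-m\rho_2(x)\bigr).
\]
Feeding both into the membership condition, the positive factor $\frac{\ell}{mn}$ cancels and the second commensurability condition becomes $(n\rho_1-m\rho_2)(x)\in\gcd(m,n)\,\z$ for all $x\in\q/\z$. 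When $m,n$ are taken coprime---which is possible in the \ql only if\qr\ direction, since there we are free to write $\la_1/\la_2$ in lowest terms---this is precisely the displayed condition $(n\rho_1-m\rho_2)(x)\in\z$, so that direction follows at once.

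The hard part will be the \ql if\qr\ direction for an arbitrary, not necessarily coprime, pair $m,n$: the hypothesis then supplies only $(n\rho_1-m\rho_2)(x)\in\z$, whereas the computation above demands the a priori stronger membership in $\gcd(m,n)\,\z$. The observation that closes this gap is that the pointwise condition $(n\rho_1-m\rho_2)(x)\in\z$ for all $x$ is the single equation $n\rho_1=m\rho_2$ in $R=\text{Hom}_{\z}(\q/\z,\q/\z)$, and that $R$ is torsion-free. Writing $m=d m'$, $n=d n'$ with $d=\gcd(m,n)$ and $\gcd(m',n')=1$, torsion-freeness lets me cancel $d$ from $d(n'\rho_1-m'\rho_2)=n\rho_1-m\rho_2=0$ to get $n'\rho_1=m'\rho_2$, i.e. $(n'\rho_1-m'\rho_2)(x)\in\z$; multiplying back by $d$ yields $(n\rho_1-m\rho_2)(x)\in d\z=\gcd(m,n)\,\z$, which is exactly the second commensurability condition. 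The torsion-freeness I would justify in one line: if $d\psi=0$ for some $\psi\in R$, then $\psi$ maps the divisible group $\q/\z$ into the finite group $\frac{1}{d}\z/\z$, and a homomorphism from a divisible group to a finite group is trivial, so $\psi=0$. With this, both displayed conditions are seen to be equivalent to the two commensurability conditions, and the proof is complete.
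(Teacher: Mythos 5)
Your proof is correct and complete; note that the paper states this lemma without proof, so there is no argument of the author's to compare against. Your reduction of the two commensurability conditions of Definition 2.3(b) to the two displayed relations is the natural one, and you correctly identify the only genuine subtlety: the direct computation turns the condition $(\ff_1-\ff_2)(x)\in\Lambda_1+\Lambda_2$ into $(n\rho_1-m\rho_2)(x)\in \gcd(m,n)\,\z$, which for non-coprime $m,n$ is a priori stronger than the stated $(n\rho_1-m\rho_2)(x)\in\z$. Your resolution via torsion-freeness of $R=\text{Hom}_{\z}(\q/\z,\q/\z)$ (a homomorphic image of the divisible group $\q/\z$ in a finite group is trivial) is sound; alternatively, the same fact is immediate from the identification $R\simeq\widehat{\z}$ recorded in Remarks 2.7(a). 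The only cosmetic point is that expressions such as $(n\rho_1-m\rho_2)(x)\in\z$ involve a mild abuse of notation, since the values lie in $\q/\z$ and one is really asserting that a lift lies in $\z$ (respectively in $\gcd(m,n)\,\z$, which is well defined because $\gcd(m,n)$ divides both $m$ and $n$); you use this consistently with the statement itself, so nothing is lost.
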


Therefore, the orbit space of the action  $n(\rho):=n\rho$ on
$\lo/\r^+$ is \textbf{the space of commensurability classes of
1-dimensional \qq-lattices up to scaling}. To study this space as a
noncommutative quotient space, we consider the semigroup \cs-crossed
product of the induced action on the algebra of continuous complex
functions on $R$.
\begin{definition}
Define $\al:\nn\crs \lra \text{Aut}(C(R))$ by
\begin{eqnarray}
\al_nf(\rho):=\left\{ \begin{array} {r@{\qquad \text{if}\quad}l}
                         f(n^{-1}\rho) & \rho\in nR\\
                         0 & \text{otherwise.}
                      \end{array} \right.
\end{eqnarray}
The semigroup \cs-crossed product $C(R)\rtimes_{\al}\nn\crs$ of the
above action is \textbf{the noncommutative quotient space of
1-dimensional \qq-lattices up to scaling by the equivalence relation
of commensurability}.
\end{definition}

\begin{remarks}
\begin{itemize}
\item[(a)] Let $\af$ denote the ring of finite ad\`{e}les on $\q$,
namely
\[
\af=\prod_{\text{res}}\qp:=\bigcup_F \; \prod_{p\in F}\qp \times
\prod_{p\notin F} \zp,
\]
where the union is taken over all finite families of rational prime
numbers and $\qp$ (resp. $\zp$) is the field of $p$-adic numbers
(resp. the ring of $p$-adic integers ). The topology of $\af$ is
defined such that each set in the above union is an open set. Then
\[
\widehat{\z}:=\prod_{p\;\text{prime}}\zp
\]
is the maximal compact subring of $\af$, and it is shown in [W] that
$\af/\widehat{\z}$ and $\q/\z$ are isomorphic as abelian groups.
This isomorphism gives rise to the following isomorphism
\[
j: \widehat{\z} \ra R, \; j(a)(x):= ax,\quad\forall
x\in\af/\widehat{\z},\; \forall a\in \widehat{\z}
\]
Therefore, we can consider R as a compact abelian group.
\item[(b)]The following map shows the Pontrjagin duality between $\q/\z$ and $R$.
\[
e:\q/\z\ra R, \quad e(r)(\rho):= e^{2\pi i \rho(r)}, \; \forall\; r \in \q, \;\forall\; \rho \in R.
\]
\item[(c)] Under the above duality $\al$, the action of $\nn\crs$ on $C(R)$, corresponds to $\beta$, the action of $\nn\crs$ on $C\s(\q/\z)$, that is the following diagram  is commutative.
\[
\xymatrix{
C\s(\q/\z) \ar[d]^{\bt}\ar[r]^{\Gamma}&C(R)\ar[d]^{\al} \\
C\s(\q/\z)\ar[r]^{\Gamma}&C(R)
}
\]
where $\Gamma$ is the Gelfand transform, i.e.
$\Gamma(i(r))(\rho)=\rho(r)$, for $r\in\q/\z$. Thus, we have the
following isomorphism:
\[
C(R)\rtimes_{\al}\nn\crs\simeq C\s(\q/\z)\rtimes_{\beta}\nn\crs.
\]
\end{itemize}
\end{remarks}
We summarize the above remarks as the following theorem:

\begin{theorem}
The \cs-algebra of the BC system can be realized as the
noncommutative quotient space of \qq-lattices up to scaling under
the equivalence relation of commensurability.
\end{theorem}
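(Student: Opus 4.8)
The plan is to assemble the desired isomorphism out of the two identifications already prepared above, and then to read off the conclusion directly from Definition 2.6. Writing $A$ for the underlying \cs-algebra of the BC system, the goal is to produce a \cs-isomorphism $A\simeq C(R)\rtimes_{\al}\nn\crs$, since by Definition 2.6 the right-hand side \emph{is}, by definition, the noncommutative quotient space of one-dimensional \qq-lattices up to scaling under commensurability. Thus the theorem is, as the sentence preceding it suggests, a bookkeeping statement that collects the content of Remarks 2.2 and 2.7.

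First I would invoke the Laca--Raeburn description in Remarks 2.2(b), which already gives $A\simeq C\s(\q/\z)\rtimes_{\bt}\nn\crs$ with $\bt$ the endomorphism action recorded there. This reduces the theorem to constructing an isomorphism $C\s(\q/\z)\rtimes_{\bt}\nn\crs\simeq C(R)\rtimes_{\al}\nn\crs$. Next I would identify the coefficient algebras by Pontrjagin duality: since $\q/\z$ is a discrete abelian group, the Gelfand transform yields a \cs-isomorphism $\Gamma:C\s(\q/\z)\xrightarrow{\sim}C(\widehat{\q/\z})$, and the character pairing $e(r)(\rho)=e^{2\pi i\rho(r)}$ of Remarks 2.7(b) identifies the dual group $\widehat{\q/\z}$ with $R$. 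Hence $\Gamma$ becomes a \cs-isomorphism $C\s(\q/\z)\xrightarrow{\sim}C(R)$ determined on generators by $\Gamma(i(r))(\rho)=e^{2\pi i\rho(r)}$, the quantity abbreviated $\rho(r)$ in Remarks 2.7(c).

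The genuine content is to check that $\Gamma$ intertwines the two actions, i.e. $\Gamma\circ\bt_n=\al_n\circ\Gamma$ for every $n\in\nn\crs$; this is exactly the commutative square asserted in Remarks 2.7(c). It suffices to test the identity on a generator $i(r)$. On one side, $\bt_n(i(r))=\frac1n\sum_{ns=r}i(s)$ is carried by $\Gamma$ to the function $\rho\mapsto\frac1n\sum_{ns=r}e^{2\pi i\rho(s)}$, the sum running over the $n$ solutions of $ns=r$ in $\q/\z$. On the other side, $\al_n(\Gamma(i(r)))$ is, by the definition of $\al_n$, the function equal to $e^{2\pi i\si(r)}$ at $\rho=n\si\in nR$ and vanishing off $nR$. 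To match them I would fix one preimage $s_0$ and write the remaining preimages as $s_0+t$ with $t$ ranging over the $n$-torsion subgroup $(\q/\z)[n]$, so that the sum factors as $e^{2\pi i\rho(s_0)}\sum_{t}e^{2\pi i\rho(t)}$. The inner character sum over $(\q/\z)[n]$ equals $n$ precisely when $\rho$ annihilates $\frac{1}{n}\z/\z$, which happens exactly when $\rho\in nR$, and vanishes otherwise; on $nR$ the surviving prefactor satisfies $e^{2\pi i\rho(s_0)}=e^{2\pi i\si(r)}$, the required value. This orthogonality-of-characters computation is the one nontrivial step, and it is precisely what forces the projection-onto-$nR$ shape of $\al_n$ in Definition 2.6.

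Finally, I would appeal to the functoriality of the semigroup \cs-crossed product: an isomorphism of coefficient algebras that intertwines the two $\nn\crs$-actions lifts canonically to an isomorphism of the crossed products, so $\Gamma$ induces $C\s(\q/\z)\rtimes_{\bt}\nn\crs\simeq C(R)\rtimes_{\al}\nn\crs$. Composing with the Laca--Raeburn identification gives $A\simeq C(R)\rtimes_{\al}\nn\crs$, and Definition 2.6 then lets us rename the right-hand side as the asserted noncommutative quotient space. I expect the intertwining computation of the third paragraph to be the main obstacle; once the orthogonality of characters is in place, the remainder is a formal assembly of facts already recorded in the excerpt.
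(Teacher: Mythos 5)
Your proposal is correct and follows exactly the route the paper intends: the paper gives no proof beyond declaring the theorem a summary of Remarks 2.2(b) (the Laca--Raeburn crossed-product description) and Remarks 2.7 (the Gelfand/Pontrjagin identification $C\s(\q/\z)\simeq C(R)$ and the commuting square intertwining $\bt$ with $\al$). The only difference is that you actually verify the intertwining $\Gamma\circ\bt_n=\al_n\circ\Gamma$ by the orthogonality-of-characters computation, a step the paper asserts without proof.
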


\subsection{Groupoid approach to the Bost-Connes system}\quad
\par
Now we construct the groupoid $G$ of the equivalence relation of
commensurability on 1 dimensional \qq-lattices up to scaling. The
groupoid \cs-algebra of this groupoid is another description of the
Bost-Connes \cs-algebra.
\par
The groupoid $G$ is defined by
\[
G:=\{(r,\rho)\in \q^+ \times R ; r \rho \in R \},
\]
with the composition defined for two elements of $G$ by
\[
(r_1,\rho_1)\circ(r_2,\rho_2):=(r_1r_2,\rho_2), \quad \text{if} \;
r_2\rho_2=\rho_1,
\]
and the source and the target are given by
\begin{eqnarray*}
s:G \ra R, \quad (r,\rho)\mapsto \rho,\\ t:G \ra R,
\quad(r,\rho)\mapsto r\rho.
\end{eqnarray*}
The convolution product on $C(G)$, the algebra of complex
continuous functions on $G$, is defined by
\[
f_1*f_2(r,\rho):=\sum_{s\rho\in R} f_1(rs^{-1}, s\rho)f_2(s,\rho).
\]
The above sum is finite, because if $\frac{a}{b} \in \q/\z$, then
$\rho(\frac{a}{b})\neq 0$ implies $\frac{1}{b}\rho\notin R$, and $Z=
\{\frac{a}{b}\in\q/\z; \rho(\frac{a}{b})=0\}$ is finite for every
$\rho\neq 0$. The involution on $C(G)$ is defined by
\[
f\s(r,\rho):=\overline{f(r^{-1},r\rho)}.
\]
\begin{proposition} ([CM1]) Let $\mathcal{R}_1$ denote the groupoid of
the equivalence relation of commensurability in 1-dimensional
\qq-lattices. The map
\[
\eta(r,\rho) = ((r^{-1}\z,\rho),(\z,\rho)), \quad \forall\;
(r,\rho)\in G,
\]
defines an isomorphism of locally compact \'etale groupoids
between $G$ and the quotient $\mathcal{R}_1/\r_+^{\ast}$ of the
equivalence relation of commensurability on the space of
1-dimensional \qq-lattices by the natural scaling action of
$\r_+^{\ast}$.
\end{proposition}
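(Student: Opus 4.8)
The plan is to treat $\eta$ in four stages---that it takes values in commensurable pairs, that it intertwines the two groupoid structures, that it is a bijection, and that it is a homeomorphism respecting the \'etale structures---using throughout the identification $\lo/\r^+\cong R$ and the commensurability criterion of Lemma~2.5. For the first stage, fix $(r,\rho)\in G$ and write $r=m/n$ with $m,n\in\nn$. In the normalization of Lemma~2.5 the first component $(r\inv\z,\rho)$ is the lattice $(\la_1\z,\la_1\rho_1)$ with $\la_1=r\inv$ and $\rho_1=r\rho$, the element $r\rho$ lying in $R$ precisely because $(r,\rho)\in G$. Applying Lemma~2.5 to $(r\inv\z,\rho)$ and $(\z,\rho)$ with these $m,n$ gives $m\la_1=n$ and $(n\rho_1-m\rho)(x)=(nr-m)\rho(x)=0$ for all $x$, so the two lattices are commensurable and $\eta(r,\rho)$ is a genuine element of $\mathcal{R}_1/\r_+^{\ast}$.

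Next I would check that $\eta$ is a groupoid homomorphism. Scaling $(r\inv\z,\rho)$ by $r$ puts it in standard form $(\z,r\rho)$, so under $\lo/\r^+\cong R$ the target and source of $\eta(r,\rho)$ are $r\rho$ and $\rho$, matching $t$ and $s$ on $G$. For composable $(r_1,\rho_1),(r_2,\rho_2)$, that is $r_2\rho_2=\rho_1$, I would first rescale the representative $\eta(r_2,\rho_2)$ by $r_2$ so that its first component becomes exactly $(\z,\rho_1)$, equal to the second component of $\eta(r_1,\rho_1)$; composing in the equivalence-relation groupoid and rescaling once more yields $\eta(r_1r_2,\rho_2)$, as required. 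Units ($r=1$) and inverses ($(r,\rho)\inv=(r\inv,r\rho)$) are checked the same way. The point needing care is exactly this bookkeeping: in $\mathcal{R}_1/\r_+^{\ast}$ the middle lattices agree only after scaling, so one must fix the scalars before composing.

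For bijectivity I would construct the inverse by normalization. Any class in $\mathcal{R}_1/\r_+^{\ast}$ has a representative whose second lattice is $(\z,\rho)$ for a unique $\rho\in R$; commensurability forces the first lattice $\lam$ to satisfy $\q\lam=\q$, hence $\lam=r\inv\z$ for a unique $r\in\q^+$, and Lemma~2.5 then forces its map to be the one above with $r\rho\in R$, so that the class equals $\eta(r,\rho)$ for a unique $(r,\rho)\in G$. Reading off $\rho$ from the normalized second lattice and $r$ from the ratio of the two lattices shows simultaneously that $\eta$ is injective and surjective.

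Finally, for the topological statement I would use that $R\cong\widehat{\z}$ is compact (Remarks~2.7) and that $\q^+$ is discrete, so that $G\se\q^+\times R$ is locally compact and, for each fixed $r$, the slice $\{\rho:r\rho\in R\}$ is compact-open while both $s$ and $t$ restrict to homeomorphisms of it onto open subsets of $R$; this is the \'etale property, with unit space $R$. Endowing $\mathcal{R}_1/\r_+^{\ast}$ with the quotient topology inherited from $\lo\times\lo$, the normalization above shows that $\eta$ and its inverse are continuous, and since $\eta$ intertwines $s$ and $t$ it becomes an isomorphism of locally compact \'etale groupoids. I expect the composition step of the second stage to be the main obstacle, as it is where the quotient by the scaling action genuinely interacts with the groupoid law; the remaining stages are essentially direct consequences of Lemma~2.5 and the compactness of $R$.
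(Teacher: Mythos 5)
The paper does not actually prove this proposition: it is quoted from [CM1] and used as a black box, so there is no in-paper argument to compare yours against. Judged on its own terms, your sketch is correct and follows the standard route (normalize via $\lo/\r^+\cong R$, apply Lemma~2.5, check the groupoid axioms in the quotient, then the topology), and you correctly identify and resolve the one genuinely delicate point, namely that composition in $\mathcal{R}_1/\r_+^{\ast}$ only makes sense after rescaling representatives so the middle lattices literally agree; your bookkeeping there ($r_2\cdot(r_2^{-1}\z,\rho_2)=(\z,r_2\rho_2)=(\z,\rho_1)$, compose, rescale by $r_2^{-1}$) checks out. The only step I would ask you to expand is in the surjectivity argument, where you say Lemma~2.5 ``forces'' the homomorphism of the first lattice: the lemma gives $n\rho_1=m\rho$ in $R$, and to conclude $\rho_1=r\rho$ (hence that the class really is $\eta(r,\rho)$ and that $r\rho\in R$) you need that multiplication by $n$ is injective on $R\cong\widehat{\z}$, which holds prime by prime since each $\zp$ is an integral domain; without that remark the first lattice's homomorphism is only pinned down up to $n$-torsion. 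With that sentence added, the sketch is a complete and correct substitute for the missing proof.
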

The above proposition allows us to consider BC \cs-algebra as
$C^{\ast}(G)$, the groupoid \cs-algebra of $G$.\\

\subsection{The Bost-Connes system and number theory}\quad
\par

The following theorem is the main result of Bost and Connes in [BC], which describes the space of \kms state for all $\beta$\tqr s.
\begin{theorem}
\begin{itemize}
\item[(a)] In the range $0<\beta\leq 1$ there is a unique \kms state. Its restriction to $\q[\q/\z]$, the image of $\q/\z$ in BC \cs-algebra, is of the form
\[
\ff_{\bt}(e(a/b))= b^{-\bt} \prod_{p \; \text{prime},\; p\mid b} \frac{1-p^{\bt-1}}{1-p^{-1}}.
\]
\item[(b)] For $1<\bt\leq\infty$ $\eps$, the extreme \kms states, are parameterized by embeddings $\iota:\q^{ab}\hookrightarrow \c$, and on $\q[\q/\z]$ we have
\[
\ff_{\bt, \iota}(e(a/b)) = Z(\bt)^{-1}\sum_{n=1}^{\infty} n^{-\bt} \iota(\zeta^n_{a/b}),
\]
where the partition function $Z(\bt) = \zeta(\bt)$ is the Riemann zeta function and $\zeta_{a/b}$ is the root of unity associated to $a/b$.
\item[(c)] For $\bt=\infty$, every $\ff\in\epsin$ maps $\q[\q/\z]$ into $\q^{ab}\subset\c$,
and the Galois group $Gal(\q^{ab}/\q)$ acts on the values of states
in $\epsin$ restricted to $\q[\q/\z]$. Then, the class field theory
isomorphism $\theta:\text{Gal}(\q^{ab}/\q)\ra R\s$ intertwines the
actions of the Galois group with the action of $R\s$ by symmetries
of $\q[\q/\z]$, that is
\[
\gamma(\ff(x))=\ff(\theta(\gamma)(x)),
\]
where $\gamma \in \text{Gal}(\q^{ab}/\q),\; \ff\in\epsin,\; x\in
\q[\q/\z]$, or equivalently, the following diagram commutes.
\[
\xymatrix{ \q[\q/\z]
\ar[d]^{\theta(\gamma)}\ar[r]^{\ff}&\q^{ab}\subset\c\ar[d]^{\gamma}
\\ \q[\q/\z]\ar[r]^{\ff}&\q^{ab}\subset\c }
\]
\end{itemize}
\end{theorem}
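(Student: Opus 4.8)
The plan is to realize the \kms states concretely through a family of representations and then treat the high- and low-temperature regimes separately. First I would construct, for each invertible $\rho\in R\s$, a representation $\pi_\rho$ of the BC \cs-algebra on $\ell^2(\nn\crs)$ by setting $\mu_n\epsilon_m=\epsilon_{nm}$ and $e(r)\epsilon_m=e^{2\pi i m\rho(r)}\epsilon_m$, where $\{\epsilon_m\}_{m\in\nn\crs}$ is the standard orthonormal basis. A direct check shows that the self-adjoint operator $H\epsilon_m=(\log m)\epsilon_m$ implements the time evolution, since $e^{itH}\mu_n e^{-itH}=n^{it}\mu_n$ and $e^{itH}$ commutes with every $e(r)$, exactly reproducing the formulas of Definition 2.1. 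The partition function of this system is then $\text{Tr}(e^{-\beta H})=\sum_{m\geq 1}m^{-\beta}=\zeta(\beta)$, which converges precisely when $\beta>1$; this dichotomy is the source of the two regimes in the statement.

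For part (b), in the range $1<\beta\leq\infty$ I would follow Example 1.3 and attach to each $\rho$ the Gibbs state $\ff_{\beta,\rho}(a)=\zeta(\beta)\inv\,\text{Tr}(e^{-\beta H}\pi_\rho(a))$, which is automatically \kms. Evaluating on the arithmetic subalgebra gives $\ff_{\beta,\rho}(e(a/b))=\zeta(\beta)\inv\sum_{m\geq 1}m^{-\beta}e^{2\pi i m\rho(a/b)}$, and writing $\zeta_{a/b}=e^{2\pi i\rho(a/b)}$ for the root of unity determined by $\rho$ yields the asserted formula with $\iota$ the embedding of $\q^{ab}$ corresponding to $\rho$ under $R\s\cong\widehat{\z}\s\cong\text{Gal}(\q^{ab}/\q)$. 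The substantive work is to prove that this accounts for \emph{all} extremal \kms states: using Proposition 1.5 one knows extremal \kms states are factor states, and one must then show that each $\ff_{\beta,\rho}$ generates a type $I_\infty$ factor, that distinct $\rho$ give disjoint (hence inequivalent) states, and conversely that an arbitrary extremal \kms state, after restriction to the commutative subalgebra $C(R)$, is a point mass on an invertible $\rho$. This surjectivity statement---classifying every \kms state as an eigen-measure on $R$ for the $\nn\crs$-action and showing that extremality forces the support onto a single $\widehat{\z}\s$-orbit---is the main analytic obstacle.

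For part (a), when $0<\beta\leq 1$ the partition function diverges, so the Gibbs recipe is unavailable and I would argue uniqueness directly from the \kms functional equation. Applying the \kms condition to the generators gives, for each $n$, the relation $\ff(\mu_n x)=n^{-\beta}\ff(x\mu_n)$, and combining this with the defining relation (d) produces a recursion for the numbers $\ff(e(a/b))$ across the primes dividing $b$. Solving this recursion forces the Euler-product expression $\ff_{\beta}(e(a/b))=b^{-\beta}\prod_{p\mid b}\frac{1-p^{\beta-1}}{1-p^{-1}}$ and, crucially, leaves no free parameter: at $\beta\leq 1$ the divergence of $\prod_p(1-p^{-\beta})\inv$ excludes the orbit-supported measures that produce distinct states when $\beta>1$, so only the invariant (Haar) measure survives. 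This collapse to a single state is the delicate point of (a).

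Finally, for part (c) I would pass to the zero-temperature limit $\beta\ra\infty$ in the formula of (b): the $m=1$ term dominates, so $\ff_{\infty,\rho}(e(a/b))=\iota(\zeta_{a/b})\in\q^{ab}$, showing each ground state lands in $\q^{ab}$. The intertwining with the Galois action then reduces to a compatibility already built into class field theory for $\q$: the symmetry $\theta(\gamma)$ acts on $e(a/b)$ through $R\s\cong\widehat{\z}\s$, while $\gamma\in\text{Gal}(\q^{ab}/\q)$ acts on $\zeta_{a/b}$ through the cyclotomic character, and the Kronecker--Weber identification $\theta$ makes these two actions agree, giving $\gamma(\ff(x))=\ff(\theta(\gamma)(x))$ and the commuting square. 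The only genuine number theory invoked here is that the class field theory isomorphism for $\q$ \emph{is} the cyclotomic character, so once the structural results of (a) and (b) are in hand, part (c) is essentially a verification.
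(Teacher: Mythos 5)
The paper itself offers no proof of this theorem: it is quoted as the main result of [BC], and the surrounding text of Section 2 only re-describes the underlying algebra. So the comparison here is between your sketch and the argument of [BC] that the paper is citing. On that score your architecture is the right one: the representations $\pi_\rho$ on $\ell^2(\nn\crs)$ for $\rho\in R\s$, the Hamiltonian $H\epsilon_m=(\log m)\epsilon_m$ with partition function $\zeta(\beta)$, the Gibbs states for $\beta>1$, and the weak limit $\beta\to\infty$ in part (c) are exactly the ingredients of the original proof, and your verification that $\pi_\rho$ intertwines the time evolution and produces the stated formula on $\q[\q/\z]$ is correct.

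However, the two hard directions are not merely left as "delicate points" --- one of them is asserted in a form that is false. In part (a), the KMS relation $\varphi(\mu_n x)=n^{-\beta}\varphi(x\mu_n)$ combined with relation (d) of Definition 2.1 yields only
\[
\sum_{ns=r}\varphi(e(s))=n^{1-\beta}\,\varphi(e(r)),
\]
which constrains \emph{sums} of the values $\varphi(e(s))$ over the $n$ solutions of $ns=r$, not the individual values; it cannot "leave no free parameter," since the identical recursion is satisfied for $\beta>1$ by the entire simplex of states in part (b), which take different values on $e(a/b)$. Your observation that the orbit measures fail to normalize when $\beta\le 1$ explains why the phase transition occurs at $\beta=1$, but it only excludes one family of candidates; it does not show nothing else survives. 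The actual uniqueness argument first shows that any $\text{KMS}_\beta$ state vanishes on the off-diagonal elements $\mu_n x\mu_m\s$ with $n\ne m$, hence is given by a probability measure on $R\cong\widehat{\z}$ satisfying a scaling condition under the $\nn\crs$-action, and then proves that for $\beta\le 1$ there is a unique such measure (the analytic core of [BC], later streamlined by an ergodicity argument); only then does the Euler-product formula follow by computing Fourier coefficients. Likewise in part (b) you correctly identify the surjectivity of $\rho\mapsto\varphi_{\beta,\rho}$ onto $\eps$ as the main obstacle, but supply no argument, so (a) and (b) are established only in the "existence" direction. Part (c) is essentially a correct verification once (b) is granted.
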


\noindent\textbf{The explicit class field theory of \qq and the BC
system.}
\par
The main theorem of the class field theory is the following
isomorphism for any number field $K$, a finite extension of \qq,
\[ \theta:Gal(K^{ab}/K)\ra\frac{C_K}{D_K},
\]
where $K^{ab},\; C_K$, and $D_K$ are respectively the maximal
abelian extension of $K$, the group of id\`ele classes of $K$, and
the connected component of the identity in the group of id\`ele
classes.\\ \par
 A theorem of Kronecker and Weber states
$\q^{ab}$ is isomorphic to $\q^{cycl}$ the cyclotomic extension of
\qq, the extension obtained by adding all roots of unity to \qq, and
$C_K/D_K$ is isomorphic to $R\s$, the group of of invertible
elements of $R$. Thus, the elements of $\q/\z$ are the generators of
$\q^{ab}$, and the above theorem illustrates the action of the
Galois group $Gal(\q^{ab}/\q)$ on the generators of $\q^{ab}$
explicitly in terms of the action of $R\s$ on $\q[\q/\z]$ via the BC
\cs-dynamical system and its ground states. Surprisingly, the
explicit description of the generators of $K^{ab}$ and the action of
the Galois group $Gal(K^{ab}/K)$ has been done only for $\q$ and
imaginary quadratic fields $\q(\sqrt{-d})$, while the similar
description for more general number fields, in particular real
quadratic fields, is the subject of Hilbert's 12th problem. As we
mentioned before, the case of imaginary quadratic fields was treated
in [CMR1]. The above theorem brought some hopes to find an answer to
this problem at least for real quadratic fields $\q(\sqrt{d})$ using
noncommutative geometry. Indeed, the Connes-Marcolli \gll-system is
an attempt towards this goal.


\section{The Connes-Marcolli system, the 2-dimensional case}
The observation summarized in Remarks 2.7 led Connes and Marcolli,
[CM1], to consider the noncommutative quotient space of
2-dimensional \qq-lattices up to scaling and equivalence relation of
commensurability as the generalization of BC system. Similar to the
description of BC \cs-algebra as a groupoid \cs-algebra, Connes and
Marcolli described 2-dimensional system as the completion of the
convolution algebra over $\rcal_2/\c\s$, where $\rcal_2$ is the
groupoid of the equivalence relation of commensurability on
2-dimensional \qq-lattices, and $\c^*$ represents the natural
scaling
in $\r^2$ considered by non-zero complex numbers. In this section, we review these results briefly.\\
\par
An arbitrary 2-dimensional \qq-lattice can be written in the form
\[
(\lam, \ff) = (\la(\z+\z\tau),\la\rho),
\]
 for some $\la\in \c\s$, $\tau\in\mathbb{H}$, $\rho\in M_2(R)=\text{Hom}(\q^2/\z^2,\q^2/\z^2)$, where
$\mathbb{H}=\{x+iy\in \c; y>0\}$ is the upper half plane. In order
to define the action of $\gl(\r)$, we choose a basis $\{e_1=1, e_2=i
\}$ of $\c$ as a 2-dimensional real vector space. We set $\lam_0:=\z
e_1+\z e_2$. Then every element $\rho \in M_2(R)$ defines a
homomorphism
\[
\rho:\q^2/\z^2\ra\q\lam_0/\lam, \quad\rho(a) = \rho_1(a)e_1 +
\rho_2(a)e_2.
\]
Let $\Gamma = \text{SL}(2,\z)$. We define the action of
$\Gamma\times\Gamma$ on the space
\[
\tilde{\mathcal{U}}:= \{ (g,\rho,\al)\in \gl(\q)\times
M_2(R)\times\gl(\r)\ ; g\rho\in M_2(R)\}
\]
by
\begin{eqnarray}
(\gamma_1,\gamma_2)(g,\rho,\al)=(\gamma_1 g \gamma_2^{-1},
\gamma_2\rho, \gamma_2\al).
\end{eqnarray}
The following proposition is the analogue of Proposition 2.8 for the
2-dimensional case.

\begin{proposition} ([CM1]) Let $\mathcal{R}_2$ denote the groupoid of the equivalence relation
of commensurability on 2-dimensional \qq-lattices. $\mathcal{R}_2$
can be parameterized by the quotient of $\tilde{\mathcal{U}}$
under the action of $\Gamma\times\Gamma$ via the map
\[
\eta :
\frac{\tilde{\mathcal{U}}}{\Gamma\times\Gamma}\longrightarrow\mathcal{R}_2
\]
\[
[(g,\rho,\al)] \mapsto ((\al\inv g\inv \lam_0,
\al\inv\rho),(\al\inv\lam_0,\al\rho)).
\]
\end{proposition}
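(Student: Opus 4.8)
The plan is to show that $\eta$ is a bijection onto $\mathcal{R}_2$ and, as in Proposition 2.8, an isomorphism of the underlying groupoids. I would proceed in three stages: first that $\eta$ is well defined (its values are commensurable pairs and it is constant on $\Gamma\times\Gamma$-orbits), then injectivity, then surjectivity. The guiding idea is that $\al\in\gl(\r)$ serves to carry a lattice onto the standard lattice $\lam_0$, the rational matrix $g$ records how the two commensurable lattices sit relative to one another, and $\rho\in M_2(R)$, constrained by $g\rho\in M_2(R)$, simultaneously encodes the two labelings.

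First I would check well-definedness. Writing $\Lambda_1=\al\inv g\inv\lam_0$ and $\Lambda_2=\al\inv\lam_0$, invertibility of $g$ over $\q$ gives $\q\Lambda_1=\al\inv g\inv\q^2=\al\inv\q^2=\q\Lambda_2$, which is the first commensurability condition of Definition 2.3(b). The second condition, equality of the labelings modulo $\Lambda_1+\Lambda_2$, is precisely where the constraint $g\rho\in M_2(R)$ is used: integrality of $g\rho$ is what makes $\ff_1=\al\inv g\inv(g\rho)=\al\inv\rho$ a well-defined homomorphism into $\q\Lambda_1/\Lambda_1$, and under the identifications $\q\Lambda_i/\Lambda_i\cong\q^2/\z^2$ induced by $\al$ the two labelings then agree modulo $\Lambda_1+\Lambda_2$. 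For invariance I would substitute the transformed triple $(\ga_1 g\ga_2\inv,\ga_2\rho,\ga_2\al)$ and use that $\ga_1,\ga_2\in\text{SL}(2,\z)$ fix $\lam_0$: the factor $\ga_2\inv$ cancels in both $(\ga_2\al)\inv\lam_0=\al\inv\lam_0$ and $(\ga_2\al)\inv(\ga_2\rho)=\al\inv\rho$, while $\ga_1\inv\lam_0=\lam_0$ absorbs the extra factor appearing in the first lattice, so $\eta$ descends to $\tilde{\mathcal{U}}/(\Gamma\times\Gamma)$.

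Next come injectivity and surjectivity. For injectivity, assume two triples $(g,\rho,\al)$ and $(g',\rho',\al')$ have the same image. Equality of the second lattices forces $\al'\al\inv$ to preserve $\lam_0$, hence $\al'\al\inv=\ga_2\in\text{GL}(2,\z)\cap\gl(\r)=\Gamma$; comparing the second labelings then gives $\rho'=\ga_2\rho$, and comparing the first lattices yields a $\ga_1\in\Gamma$ with $g'=\ga_1 g\ga_2\inv$, so the triples lie in one $\Gamma\times\Gamma$-orbit. For surjectivity, starting from a commensurable pair $((\Lambda_1,\ff_1),(\Lambda_2,\ff_2))$ with common $\q$-span $V$, I would pick $\al\in\gl(\r)$ with $\al\Lambda_2=\lam_0$ (so $\al V=\q^2$), define $g\in\gl(\q)$ by $g\inv\lam_0=\al\Lambda_1$, which is possible because $\al\Lambda_1$ is a full lattice in $\q^2$, and recover $\rho=\al\ff_2\in M_2(R)$; these data then map back to the given pair.

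The step I expect to be the main obstacle is surjectivity, and within it two points. The first is verifying that the constructed triple genuinely lies in $\tilde{\mathcal{U}}$, i.e. that $g\rho\in M_2(R)$: this is exactly the translation of the labeling condition $(\ff_1-\ff_2)(x)\in\Lambda_1+\Lambda_2$ into an integrality statement, and it must be carried out by tracking the isomorphisms $\q\Lambda_i/\Lambda_i\cong\q^2/\z^2$ carefully. The second is the determinant-sign bookkeeping required to stay inside $\gl=\text{GL}_2^+$ and $\Gamma=\text{SL}(2,\z)$ rather than the full linear groups; one must check that $\al$ and $g$ can always be chosen with positive determinant and that the stabilizer of $\lam_0$ met in the injectivity argument is indeed $\text{SL}(2,\z)$. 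Finally, to upgrade the bijection to an isomorphism of locally compact \'etale groupoids as in Proposition 2.8, I would verify that $\eta$ intertwines the source, target, and composition, which is routine once the bijection is established.
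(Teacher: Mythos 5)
The paper does not actually prove this proposition; it is stated with only a citation to [CM1], so there is no in-paper argument to compare against. Your outline reproduces the standard argument from [CM1] and is correct in all its essential steps: reading $\q\Lambda_1=\q\Lambda_2$ off the invertibility of $g$ over $\q$; recognizing $g\rho\in M_2(R)$ as exactly the integrality needed for $\al\inv g\inv(g\rho)$ to descend to a labeling modulo $\Lambda_1=\al\inv g\inv\lam_0$, so that the two labelings are represented by the same element and hence agree modulo $\Lambda_1+\Lambda_2$; identifying the stabilizer of $\lam_0$ in $\gl(\r)\cap\mathrm{GL}(2,\z)$ with $\mathrm{SL}(2,\z)$ for injectivity; and normalizing $\Lambda_2$ to $\lam_0$ by a suitable $\al$ for surjectivity. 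Two remarks. First, your invariance computation $(\ga_2\al)\inv(\ga_2\rho)=\al\inv\rho$ silently corrects what must be a typo in the displayed formula: the second lattice should carry the labeling $\al\inv\rho$, not $\al\rho$ (compare $\eta(r,\rho)=((r\inv\z,\rho),(\z,\rho))$ in Proposition 2.8, where both members of the pair carry the same labeling); with $\al\rho$ the map would not even be constant on $\Gamma\times\Gamma$-orbits. Second, the only place your write-up stops short of a proof is the point you flag yourself: in the surjectivity step one must actually verify that $g\rho=g\al\ff_2$ lies in $M_2(R)$ and that $\al\inv\rho$ reduces modulo $\Lambda_1$ to the given $\ff_1$, and this genuinely uses the labeling half of commensurability, $(\ff_1-\ff_2)(x)\in\Lambda_1+\Lambda_2$; already in one dimension the analogous relation $2\rho_1=3\rho_2$ in $R$ forces $\rho_1\in 3R$ and $\rho_2\in 2R$, which is what makes the reduction come out right. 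That computation should be written out rather than announced, but it does close, so the plan is sound.
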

Since two \qq-lattices $(\la_k,\ff_k), k=1, 2$ are commensurable
if and only if for any $\la \in \c\s$, $(\la\lam_k,\la\ff_k)$ are
commensurable, we should consider $\mathcal{R}_2/\c\s$, where the
action of $\c\s$ is defined by the embedding $\c\s$ into $\gl(R)$
by
\begin{eqnarray}
a+ib\in\c\s\mapsto \left(\begin{array}{cc} a & b \\ -b & a
\end{array} \right) \in \gl(\r).
\end{eqnarray}
However, this action is not free, so $\mathcal{R}_2/\c\s$ is not a
groupoid anymore. But one still can define a convolution product on
$C_c(\mathcal{R}_2/\c\s)$, the algebra of continuous complex
functions on $\mathcal{R}_2/\c\s$ with compact support. First, we
observe that (3.2) gives rise to the following isomorphism:
\begin{eqnarray}
\frac{\gl(\r)}{\c\s}\longrightarrow\mathbb{H}
\end{eqnarray}
\[
\left(\begin{array}{cc} a & b\\ c & d \end{array} \right) \mapsto
\frac {ai+b}{ci+d}.
\]
Thus, we can identify $\mathcal{R}_2/\c\s$ with the quotient of the
space
\[
\mathcal{U}:=\{  (g,\rho,\al)\in \gl(\q)\times
M_2(R)\times\mathbb{H}\ ; g\rho\in M_2(R)\}
\]
under the action of $\Gamma\times\Gamma$ defined in (3.1). Now, one
may consider elements of $C_c(\mathcal{R}_2/\c\s)$ as those
continuous complex functions on $\mathcal{U}$ that are invariant
under the action $\Gamma\times\Gamma$. Then, the convolution product
is defined by
\[
f_1*f_2(g, \rho, z):=\sum_{s\in \Gamma\setminus \gl(\q);\;
s\rho\in M_2(R) } f_1(gs\inv, s\rho, s(z))f_2(s, \rho, z),
\]
and the involution is defined by
\[
f\s(g, \rho, z):=\overline{f(g\inv, g\rho, g(z))}.
\]
Afterwards, the above algebra is represented as a subalgebra of
operators on a Hilbert space $\h$. The completion of the latter
algebra is the underlying \cs-algebra of the Connes-Marcolli
$\text{GL}_2$-system $A_2$ and the time evolution is given by
\[
\si_t(f)(g,\rho,z):=\text{det}(g)^{it} f(g,\rho,z)
\]
\par

As before, Let $\Gamma=\text{SL}(2,\z)$ and
$\widehat{\z}=\prod_{p\;\text{prime}}\zp$. Let $\Gamma$ act on
$\mathbb{H}$ via linear fractional transformations, that is, for
$g=\left(\begin{array}{ll} a&b\\c&d\end{array}\right)\in \Gamma$ and
$z\in \mathbb{H}$, we have $g(z)=\frac{az+b}{cz+d}$. Moreover, let
$\Gamma$ act on $\widehat{\z}$ componentwise, i.e. for $(m_p)_p\in
\widehat{\z}$ and $g\in \Gamma$, we have $g((m_p)_p)=(gm_p)_p$.
Combining these two actions componentwise, we obtain an action of
$\Gamma$ on $\mathbb{H}\times \text{GL}_2(\widehat{\z})$. Then, the
space of \kms states of the Connes-Marcolli $\text{GL}_2$-system is
determined as follows:
\begin{theorem}
The \kms states of the $\text{GL}_2$-system are characterized as
follows:
\begin{itemize}
\item[(a)] For $\bt < 1$ there are no \kms states.
\item[(b)] For $1<\bt \leq 2$ there is a unique \kms state.
\item[(c)] For $\bt>2$ there is a one-to-one affine correspondence between \kms states
and probability measures on $\Gamma \setminus (\mathbb{H}\times
\text{GL}_2(\widehat{\z}))$. In particular, extremal \kms states are
in bijection with $\Gamma$-orbits in $\mathbb{H}\times
\text{GL}_2(\widehat{\z})$
\end{itemize}
\end{theorem}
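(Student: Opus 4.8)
The plan is to treat the three temperature ranges separately, in each case translating the \kms condition into a statement about a probability measure on the space of $2$-dimensional \qq-lattices. Since the time evolution is $\si_t(f)(g,\rho,z)=\text{det}(g)^{it}f(g,\rho,z)$, the generator of the flow is ``$\log\text{det}$'', so a \kms state is forced to be of Gibbs type relative to this grading. Concretely, I would first restrict a candidate state $\ff$ to the commutative subalgebra of functions supported on the diagonal $g=1$, obtaining a probability measure on the lattice space, and record the covariance relation that the \kms condition imposes on this measure under the convolution operators implementing commensurability (via the parameterization of $\rcal_2$ by $\tilde{\mathcal{U}}/(\Gamma\times\Gamma)$ recalled above). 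The whole argument then bifurcates according to whether the normalization demanded by this relation is finite.

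For the low-temperature range $\bt>2$ I would produce \kms states explicitly. To each \emph{invertible} $2$-dimensional \qq-lattice, i.e. a pair $(z,\rho)$ with $z\in\mathbb{H}$ and $\rho\in\text{GL}_2(\widehat{\z})$ rather than merely $\rho\in M_2(R)$, I associate the representation $\pi_{z,\rho}$ of $A_2$ by convolution on $\ell^2(\Gamma\setminus M_2^+(\z))$, where $M_2^+(\z)$ denotes the integer matrices of positive determinant indexing the lattices commensurable to $(z,\rho)$; the Hamiltonian $H$ implementing $\si$ is then diagonal with eigenvalue $\log\text{det}(m)$ on the basis vector indexed by $[m]$. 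I define the Gibbs state
\[
\ff_{\bt,z,\rho}(f)=\frac{\text{Tr}\big(\pi_{z,\rho}(f)\,e^{-\bt H}\big)}{\text{Tr}\big(e^{-\bt H}\big)},
\]
and verify the \kms condition directly from the trace property together with $\si_t(f)=\text{det}(g)^{it}f$. The decisive computation is the partition function: since the number of cosets in $\Gamma\setminus M_2^+(\z)$ of determinant $n$ equals $\sum_{d\mid n}d$, one gets
\[
\text{Tr}\big(e^{-\bt H}\big)=\sum_{n\geq 1}\Big(\sum_{d\mid n}d\Big)\,n^{-\bt}=\zeta(\bt)\zeta(\bt-1),
\]
which is finite exactly when $\bt>2$. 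I then check that distinct $\Gamma$-orbits of $(z,\rho)$ give distinct extremal states, so that $\eps$ is in bijection with $\Gamma\setminus(\mathbb{H}\times\text{GL}_2(\widehat{\z}))$, and that every \kms state is an integral $\ff_{\mu}(f)=\int\ff_{\bt,z,\rho}(f)\,d\mu$ against a probability measure $\mu$ on this orbit space; affinity and injectivity of $\mu\mapsto\ff_{\mu}$ give the correspondence in (c).

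For the remaining ranges the invertible construction has divergent normalization, so I would argue through the induced measure of the first paragraph. When $1<\bt\leq 2$ the factor $\zeta(\bt)$ still converges but $\zeta(\bt-1)$ does not, so no \kms state can be carried by invertible lattices; the equilibrium measure must instead be the canonical one supported on the full lattice space, and the covariance relation forces it to be harmonic for the commensurability (Hecke) operators. Uniqueness of such a harmonic probability measure follows from ergodicity of the commensurability action, yielding a single \kms state. When $\bt<1$ even $\zeta(\bt)$ diverges, the normalization required by the equilibrium relation cannot be made finite, and positivity of $\ff$ becomes incompatible with the functional equation forced by the \kms condition; hence no \kms state exists.

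The main obstacle is the uniqueness assertion in the intermediate band $1<\bt\leq 2$: isolating the exact harmonicity condition satisfied by the induced measure and then proving the ergodicity that pins it down to a single probability measure. The convergence bookkeeping leading to $\zeta(\bt)\zeta(\bt-1)$ is routine once the indexing by $\Gamma\setminus M_2^+(\z)$ is in place, but the genuinely delicate point is that the scaling action of $\c\s$ is not free (as noted just before the definition of the convolution product), so $\rcal_2/\c\s$ is only an algebra and not a groupoid; controlling the resulting representation theory and the ergodic behaviour of the commensurability relation is where the real work lies, and the complete analysis is carried out in [CM1].
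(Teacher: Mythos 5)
The first thing to say is that the paper does not prove this theorem: it states it and explicitly defers to [CM1] and, for the complete argument including the case $\bt=1$, to Theorems 3.7 and 4.1 of [LLaN]. So your proposal can only be measured against those sources. Your treatment of the low-temperature range $\bt>2$ is essentially the correct Connes--Marcolli argument: the representation of $A_2$ on $\ell^2(\Gamma\setminus M_2^+(\z))$ attached to an invertible \qq-lattice $(z,\rho)$, the Hamiltonian acting by $\log\det$, the classical count that the number of cosets in $\Gamma\setminus M_2^+(\z)$ of determinant $n$ is $\sigma_1(n)=\sum_{d\mid n}d$, the partition function $\zeta(\bt)\zeta(\bt-1)$ converging exactly for $\bt>2$, and the resulting Gibbs states indexed by $\Gamma\setminus(\mathbb{H}\times \text{GL}_2(\widehat{\z}))$. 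The one substantive point you gloss over in (c) is surjectivity: showing that \emph{every} $\text{KMS}_{\bt}$ state for $\bt>2$ is an integral of these Gibbs states requires proving that the measure induced on the space of \qq-lattices by an arbitrary \kms state is concentrated on the commensurability classes of \emph{invertible} lattices; this is not automatic, and it is precisely where [LLaN] go beyond [CM1].

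The genuine gaps are in (a) and (b). For $\bt<1$, ``positivity becomes incompatible with the functional equation'' is not an argument: the actual proof extracts from the \kms condition a scaling identity of the form $\mu(mY)=\det(m)^{-\bt}\mu(Y)$ for the measure $\mu$ induced on $M_2(\widehat{\z})$ and suitable $m\in M_2^+(\z)$, and then shows by summing over the $\sigma_1(n)$ cosets of each determinant that the total mass would have to exceed $1$ when $\bt<1$. The divergence of $\zeta(\bt)$ alone proves nothing, since a \kms state could in principle be supported away from invertible lattices --- indeed that is exactly what happens in the range $1<\bt\leq 2$. For $1<\bt\leq 2$ you assert that the induced measure is ``harmonic for the Hecke operators'' and that ``ergodicity of the commensurability action'' forces uniqueness, but you neither identify the harmonicity condition nor establish the ergodicity; this uniqueness (and even the existence of a \kms state in that range) is the hardest part of the theorem, was left incomplete in [CM1], and occupies most of [LLaN], where it rests on an ergodicity result for the action of $\text{GL}_2(\q)$ on $M_2(\af)$. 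As it stands, your proposal is a correct road map for (c) but only a statement of intent for (a) and (b).
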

Some parts of the above theorem was proved in [CM1]. For the
complete proof and more discussions on the above theorem and the
case $\beta=1$ see Theorems 3.7 and 4.1 as well as Remarks 3.8 and
4.8 of [LLaN].
\bibliographystyle {amsalpha}

\begin{thebibliography} {VDN92}
\bibitem [BC] {bc} {\sc Bost, J.B., Connes, A.} Hecke algebras, type III factors and phase
transition with spontaneous symmetry breaking in number theory,
Selecta Math. (New Series) Vol.1 (1995), no. 3, 411--456.

\bibitem [BtR]{btr} {\sc Bratteli, O., Robinson, D.W.} Operator algebras and quantum statistical
mechanics I,II Springer-Verlag, New York-Heidelberg-Berlin (1981)

\bibitem [Co] {co} {\sc Cohen P.B.} A \cs-dynamical system with Dedekind zeta partition
function and spontaneous symmetry breaking, Journal de Th\'{e}orie
des Nombres de Bordeaux, 11, (1999), 15–-30.

\bibitem [CM1] {cm1} {\sc Connes, A., Marcolli, M.} From physics to
number theory via noncommutative gemetry, I: Quantum Statistical
Mechanics of Q-lattices. in Frontiers in number theory, physics, and
geometry. I, 269--347, Springer, (2006).

\bibitem [CM2] {cm2} {\sc Connes, A., Marcolli, M.} Noncommutative geometry, quantum fields and
motives, Preprint available at http://www.alainconnes.org.

\bibitem [CMR1] {cmr1} {\sc Connes, A., Marcolli, M., Ramachandran, N.} KMS states
and complex multiplication. Selecta Math. (New Series) 11 (2005),
no. 3-4, 325--347.

\bibitem [CMR1] {cmr2} {\sc Connes, A., Marcolli, M., Ramachandran, N.} KMS states
and complex multiplication (part II). in Operator Algebras: The Abel
Symposium 2004", 15--59, Abel Symp., 1, Springer Verlag, (2006).

\bibitem [CuLi] {cu} {\sc Cuntz, J., Li, X.} The regular \cs-algebra of an integral
domain. Preprint (arXiv:0807.1407).


\bibitem [HP] {hp} {\sc Ha, E., Paugam, F.} Bost-Connes-Marcolli systems for
Shimura varieties. I. Definitions and formal analytic properties.
IMRP Int. Math. Res. Pap. (2005), no. 5, 237--286.

\bibitem [J] {j} {\sc Jacob, B.} Bost-Connes type systems for function fields. J. Noncommut. Geom. 1 (2007), 141–-211.

\bibitem [KLnQ] {klnq} {\sc Kaliszewski, S., Landstad, M. B., Quigg, J.} Heck \cs-algebras,
Schlichting comletions, and Morita equivalence. preprint
(arXiv:0311222).

\bibitem [L1] {l1} {\sc Laca, M.} Semigroups of $\ast$-endomorphisms,
Dirichlet series, and phase transitions. Journal of Functional
Analysis, 152, (1998), 330--378.

\bibitem [L2] {l2} {\sc Laca, M.} From endomorphisms to automorphisms and back: dilations and full corners.  J. London Math. Soc. (2) 61 (2000),
893--904.

\bibitem [LR1] {lr1} {\sc Laca, M., Raeburn, I.} A semigroup crossed product arising from number
theory. J. London Math. Soc. (2) 59 (1999), no. 1, 330-344.

\bibitem [LR2] {lr2} {\sc Laca, M., Raeburn, I.} The ideal structure of Hecke \cs-algebra of Bost and Connes.
Math. Ann. 318, (2000), 433--451.


\bibitem [LLaN]{llan} {\sc Laca, M., Larsen, N., Neshveyev, S.} Phase transition in the
Connes-Marcolli GL(2)-system, J. Noncommut. Geom. 1 (2007), no. 4,
397--430.

\bibitem [T] {t} {\sc Tzanev, K.} Hecke \cs-algebras and amenability, J. Operator Theory 50
(2003), 169--171.

\bibitem [W] {w} {\sc Weil, A.} Basic number theory. Springer-Verlag New York Heidelberg Berlin, 3rd edition,
(1974).

\end{thebibliography}


\end{document}